\title[]{Solution theory to\\semilinear stochastic equations of Schr\"odinger type on curved spaces\\I - Operators with uniformly bounded coefficients}
\author{Alessia Ascanelli}
\address{Dipartimento di Matematica ed Informatica, Universit\`a di Ferrara, Via Machiavelli n.~30, 44121 Ferrara, Italy}
\email{alessia.ascanelli@unife.it}
\author{Sandro Coriasco}
\address{Dipartimento di Matematica ``G. Peano'', Universit\`a degli Studi di Torino, via Carlo Alberto n.~10, 10123 Torino, Italy}
\email{sandro.coriasco@unito.it}
\author{Andr{\'e} S{\"u}\ss}
\address{C/O Dipartimento di Matematica ed Informatica, Universit\`a di Ferrara, Via Machiavelli n.~30, 44121 Ferrara, Italy}
\email{suess.andre@web.de}
\date{}
\newcommand*{\ii}{\mathrm{i}}
\newcommand*{\scrF}{\ensuremath{\mathscr{F}}}	%sigma-field
\newcommand*{\caF}{\ensuremath{\mathcal{F}}}		%Fourier transform
\newcommand*{\caH}{\ensuremath{\mathcal{H}}}
\newcommand*{\caS}{\ensuremath{\mathcal{S}}}		%Schwartz functions
\def\ds{\displaystyle}
\newcommand*{\N}{\mathbb{N}}										%natural numbers
\newcommand*{\Z}{\mathbb{Z}}										%integer numbers
\newcommand*{\R}{\mathbb{R}}										%real numbers
\newcommand*{\Rd}{{\mathbb{R}^d}}							%real numbers
\newcommand*{\C}{\mathbb{C}}
\newcommand*{\E}{\mathbb{E}}										%expectation
\renewcommand*{\P}{\mathbb{P}}									%probability measure
\def\<{{\langle}}
\def\>{{\rangle}}
\def\BBB{\mathfrak{B}}
\newcommand{\Lip}{\mathcal{L}ip}
\newcommand{\Liploc}{\mathcal{L}ip_{\mathrm{loc}}}
\newcommand{\Hzzeta}{\mathcal{H}_{z,\zeta}}
\newcommand{\HzzetaRd}{\mathcal{H}_{z,\zeta}(\R^d)}
\numberwithin{equation}{section}
\theoremstyle{plain}
\newtheorem{lemma}{Lemma}[section]
\newtheorem{theorem}[lemma]{Theorem}
\theoremstyle{definition}
\newtheorem{definition}[lemma]{Definition}
\newtheorem{remark}[lemma]{Remark}
\newtheorem{example}[lemma]{Example}
\newcommand{\beqsn}{\arraycolsep1.5pt\begin{eqnarray*}}
\newcommand{\eeqsn}{\end{eqnarray*}\arraycolsep5pt}
\newcommand{\beqs}{\arraycolsep1.5pt\begin{eqnarray}}
\newcommand{\eeqs}{\end{eqnarray}\arraycolsep5pt}
\definecolor{red}{rgb}{1,0,0}
\def\Op{ {\operatorname{Op}} }
\begin{document}

\begin{abstract}
We study the Cauchy problem for Schr\"odinger type stochastic partial differential equations with uniformly bounded coefficients on a curved space. We give conditions on the coefficients, on the drift and diffusion terms, on the Cauchy data, and on the spectral measure associated with the noise, such that the Cauchy problem admits a unique function-valued mild solution in the sense of Da Prato and Zabczyc.
\end{abstract}

\subjclass[2010]{Primary: 35R60, 60H15, ; Secondary: 35Q40}

\keywords{Stochastic partial differential equations; Schr\"odinger equation; Curved space; Function-valued solutions; Variable coefficients; Fundamental solution}

\maketitle

%%%%%%%%%%%%%%%%%%%%%%%%%%%%%%%%%%%%%%%%%%%%%%%%%%%%%%%
%																											%
%	INTRO																						%
%																											%
%%%%%%%%%%%%%%%%%%%%%%%%%%%%%%%%%%%%%%%%%%%%%%%%%%%%%%%
%
\section{Introduction}\label{sec:intro}

In this paper we are interested in the Cauchy problem for a semilinear stochastic equation of Schr\"odinger type, that is
\begin{equation}\label{eq:SPDE}
\left\{
\begin{array}{l} 
P(x,\partial_t,\partial_x) u(t,x)= \gamma(t,x,u(t,x)) + \sigma(t,x, u(t,x))\dot{\Xi}(t,x), \quad (t,x)\in[0,T]\times\R^d,
\\
u(0,x)=u_0(x),\quad x\in\R^d,
\end{array}\right.
\end{equation}
where: 
\begin{itemize} 
\item $P$ is a Schr\"odinger type operator on a curved space of the form considered, e.g., by Craig (see \cite{craig} and the literature mentioned therein), 
namely,
\begin{equation}\label{operator}
\begin{aligned}
 P(x,\partial_t,\partial_x) &=i\partial_t +\ds\frac12\sum_{j,\ell=1}^d\partial_{x_j}\left(a_{j\ell}(x)\partial_{x_\ell}\right)+m_1(x,-i\partial_x)+m_0(x,-i\partial_x)
 \\
 &=i\partial_t +a(x,D_x) +a_1(x,D_x) +m_1(x,D_x)+m_0(x,D_x),
 \end{aligned}
\end{equation}
where, having set, as usual, $D_x=-i\partial_x$:
\\
$a(x,\xi):=-\ds\frac12\sum_{j,\ell=1}^d a_{j\ell}(x)\xi_j\xi_\ell$, $a_{j\ell}=a_{\ell j}$, $j,\ell=1,\dots,d$, is the \textit{Hamiltonian} of the equation, \\
$a_1(x,\xi) := \ds\frac i2 \ds\sum_{j,\ell=1}^d \partial_{x_j}a_{j\ell}(x)\xi_\ell$, $m_1(x,\xi)$ comes from a magnetic field and $m_0(x,\xi)$ is a potential term;
\item $\gamma$ and $\sigma$, respectively the drift term and the diffusion coefficient, are real-valued functions, subject to certain regularity conditions
(see below);
\item $\Xi$ is an $\mathcal S'(\R^d)$-valued Gaussian process, white in time and coloured in space, with \textit{correlation measure} $\Gamma$ and \textit{spectral measure}
$\mathfrak{M}$ (see Section \ref{sec:noise} for the precise definition);
\item $u$ is an unknown stochastic process, called \emph{solution} of the Cauchy problem \eqref{eq:SPDE}.
\end{itemize}

\medskip

 To give meaning to \eqref{eq:SPDE} we rewrite it formally in its corresponding integral form and look for \emph{mild solutions}, that is, stochastic processes $u(t)$ satisfying an integral equation of the form
\begin{equation}\label{eq:mildsolutionSPDE}
  u(t) = S(t)u_0-i\int_0^tS(t-s)\gamma(s,u(s))ds -i\int_0^tS(t-s)\sigma(s,u(s))d\Xi(s), \quad \forall t\in [0,T_0],\ 0<T_0\leq T,\ x\in\R^d,
  \end{equation}
where $S(t)$ is the propagator of the evolution operator $P$, that is, a family of operators depending on the parameter $t\in[0,T_0]$ such that, for every $t\in[0,T_0]$, it holds $P(x,\partial_t,\partial_x)\circ S(t)=0$ and $S(0)=Id.$ Note that the first integral in \eqref{eq:mildsolutionSPDE} is of deterministic type, while the second is a stochastic integral.
The kind of solution $u$ we can construct for the Cauchy problem \eqref{eq:SPDE} depends on how one makes sense of the stochastic integral appearing in \eqref{eq:mildsolutionSPDE}. Here we follow the approach by Da Prato and Zabczyk  (see \cite{dapratozabczyk}), which consists in associating a $\caH$-valued Brownian motion with the random noise, 
where $\caH$ is an appropriately chosen Hilbert space. One can then define the stochastic integral as an infinite sum of It\^{o} integrals with respect to one-dimensional Brownian motions. This leads to solutions involving random functions taking values in $\caH$.

In literature, the existence of a unique solution to the Cauchy problem for an SPDE is often stated under suitable conditions on the coefficients and a \emph{compatibility condition} between the noise $\Xi$ and the equation in \eqref{eq:SPDE}, expressed in terms of integrals with respect to the spectral measure $\mathfrak M$ associated with the noise (see Section \ref{sec:noise}). Recently, we studied a class of stochastic PDEs with $(t,x)$-depending unbounded coefficients, admitting, at most, a polynomial growth as $|x|\to\infty$. We dealt both with hyperbolic and parabolic type operators, constructing a solution theory for the associated classes of Cauchy problems, see \cite{ACS19b, semilinearpara}. Now, we want to investigate the case of Schr\"odinger type equations on a curved space, whose theory (also in the deterministic case) is far from being straightforward (see, e.g., 
\cite{craig, wunsch, itonakamura, nicola, yajima}). In several applications of Schrödinger equation, a random potential appears: when this potential depends on $x$, it has an effect on the dynamics of the solution. For instance, focusing/defocusing Schrödinger equations with a white in time and coloured in space noise are studied in $L^p_x$- based Sobolev spaces in \cite{DbD99, DbD2002, DbD2005, oh}, together with the stopping time for the solution.

In this paper we start by
considering an operator $P$ of the form \eqref{operator} with uniformly bounded coefficients. Our analysis will then continue, considering the case of potentials with polynomial growth, 
in the forthcoming paper \cite{fantasma}. For such reason, in this series of papers we adopt a unified treatment, employing the class $S^{m,\mu}(\R^d)$, $m,\mu\in\R$, of symbols of 
order $(m,\mu)$, given by the set of all functions $a\in {C}^\infty(\R^d\times\R^d)$ satisfying, for every $\alpha,\beta\in \Z^n_+$, the global estimates
$$|\partial_\xi^\alpha\partial_x^\beta a(x,\xi)|\leq C_{\alpha,\beta}\langle x\rangle^{m-|\beta|}\langle\xi\rangle^{\mu-|\alpha|},\quad (x,\xi)\in \R^{2d},$$
for suitable constants $C_{\alpha\beta}>0$. Recall that, with any symbol $a\in S^{m,\mu}(\R^d)$, it is associated a pseudodifferential operator  
\begin{equation}\label{eq:psdo}
[\Op(a)u](x)=[a(\cdot,D)u](x)=(2\pi)^{-d}\int e^{\ii x\xi}a(x,\xi)\hat{u}(\xi)d\xi, \quad u\in\caS(\R^d),
\end{equation}
linear and continuous from $\caS(\R^d)$ to itself, extended by duality to a linear continuous operator from $\caS^\prime(\R^d)$ to itself 
(see, e.g., the introductory sections of \cite{ACS19b,ACS19a,linearpara,semilinearpara} and \cite{cordes}, for details about the associated calculus).
Moreover, $\Op(a)$ acts continuously from $H^{r,\rho}(\R^d)$ to $H^{r-m,\rho-\mu}(\R^d)$,
where, given $r,\rho\in\R$, the \textit{weighted Sobolev} spaces $H^{r,\rho}(\R^d)$ (also known as Sobolev-Kato spaces) are defined by
$$H^{r,\rho}(\R^d):=\{u\in\mathcal S'(\R^d)\vert\ \Op(\lambda_{r,\rho}) u\in L^2(\R^d)\}, \quad \lambda_{r,\rho}(x,\xi)=\langle x\rangle^{r}\langle\xi\rangle^{\rho}.$$

\medskip
Here we assume the following hypotheses on the operator $P$ (see \cite{craig}):
\begin{enumerate}
\item\label{ctr:aorderzero} the \textit{Hamiltonian} satisfies $a\in S^{0,2}(\R^d)$;
\item \label{ctr:aorder1}the \textit{lower order metric terms} satisfy $a_1\in S^{-1,1}(\R^d)$;
\item the \textit{magnetic field term} satisfies $m_1\in S^{0,1}(\R^d)$ and is real-valued;
\item the \textit{potential} satisfies $m_0\in S^{0,0}(\R^d)$;
\item \label{ctr:ell} $a$ satisfies, for all $x,\xi\in\R^d$, $C^{-1}|\xi|^2\leq a(x,\xi)\leq C|\xi|^2$. 
\end{enumerate}
\begin{remark}
	\begin{enumerate}
	\item[(i)] In the sequel, we will often omit the base spaces $\R^d$, $\R^{2d}$, etc., from the notation.
	\item [(ii)] The symbol spaces $S^{m,\mu}$ are denoted by $S^{\mu,m}(1,0)$ in \cite{craig}, where it is remarked that the 
	ellipticity condition \eqref{ctr:ell}, together with the other hypotheses on $a$ and $a_1$,
	implies that the matrix $(a_{j\ell})$ is invertible, as well as that the Riemannian metric given by 
	the matrix $(a_{j\ell})^{-1}=(a^{j\ell})=\mathfrak{a}$ is asymptotically flat.
	\item[(iii)] Notice that, in view of the hypotheses on $\mathfrak{a}$, our analyis actually covers the case
	\[
		P(x,\partial_t,\partial_x)=i\partial_t+\frac{1}{2}\Delta_a+\widetilde{m}_1(x,\partial_x)+m_0(x,\partial_x),
	\]
	where $\widetilde m_1\in S^{0,1}$, $m_0\in S^{0,0}$, and 
	\[
		\Delta_a=\sqrt{\det(\mathfrak{a})}\sum_{j,\ell=1}^d
		\partial_{x_j}\left[\sqrt{\det(\mathfrak{a})^{-1}}\,a^{j\ell}\,
		\partial_{x_\ell}\right]
	\]
	is the Laplace-Beltrami operator associated with $\mathfrak{a}$. Indeed, \eqref{ctr:aorderzero}, \eqref{ctr:aorder1} and \eqref{ctr:ell} imply that $a_{j\ell}\in S^{0,0}$ and $\det(\mathfrak{a})\ge c>0$. In turn, this implies
	$\det(\mathfrak{a}), \det(\mathfrak{a})^{-1}, a^{j\ell}, \sqrt{\det(\mathfrak{a})}, \sqrt{\det(\mathfrak{a})^{-1}}\in S^{0,0}$ and
	\begin{align*}
	 	\Delta_a&= \sqrt{\det(\mathfrak{a})}\sum_{j,\ell=1}^d
		\partial_{x_j}\left[\sqrt{\det(\mathfrak{a})^{-1}}\,a^{j\ell}\,
		\partial_{x_\ell}\right]
		= \sum_{j,\ell=1}^d
		a^{j\ell}\partial_{x_j}\partial_{x_\ell}
		+ \sum_{j,\ell=1}^d \sqrt{\det(\mathfrak{a})}
		\left\{\partial_{x_j} \left[
		\sqrt{\det(\mathfrak{a})^{-1}}\, a^{j\ell}
		\right]
		\right\}\partial_{x_\ell}
		\\
		&= \sum_{j,\ell=1}^d
		a^{j\ell}\partial_{x_j}\partial_{x_\ell}
		\mod \Op(S^{-1,1}). 
	\end{align*}
	Then, the \textit{non-selfadjoint terms} can be included into 
	$\widetilde{m}_1\in S^{0,1}\supset S^{-1,1}$, see \cite[p.XX-4]{craig}.
	\end{enumerate}
\end{remark}

To state the main result of the present paper we need to introduce a subclass of the Sobolev-Kato spaces and a class of Lipschitz functions
(the latter, analogous to those appearing in \cite{ACS19b}).
\begin{definition}\label{def:Hpint}
	Given $z\in\N$, $\zeta\in\R$, set $\HzzetaRd :=\ds\bigcap_{j=0}^z H^{z-j,j+\zeta}(\R^d)$.
	The space $\mathcal H_{z,\zeta}(\R^d)$ is endowed with the norm 
	\begin{equation}\label{eq:normHpint}
		\|u\|_{\HzzetaRd}:=\ds\sum_{j=0}^z \|u\|_{H^{z-j,j+\zeta}(\R^d)}.
	\end{equation}
\end{definition}
\noindent
By the immersion properties of the Sobolev-Kato spaces, we immediately see that 
$H^{z,z+\zeta}(\R^d)\subset \mathcal H_{z,\zeta}(\R^d)\subset H^{z,\zeta}(\R^d)$.

\begin{remark}\label{rem:Hpint}
\begin{itemize}
\item[(i)]Since every space $H^{r,\rho}$ with $r\geq 0$ and $\rho>d/2$ is an algebra (see \cite[Proposition 2.2]{AC06}), also 
$\Hzzeta$ is an algebra for $\zeta>d/2$ (see Example \ref{ex:power} below). 
\item[(ii)]The Hilbert spaces based on the norm \eqref{eq:normHpint} for an arbitrary $\zeta\in\N$ are mentioned in \cite[Page XX-12]{craig}, where, in particular,
	the \textit{unweighted} Sobolev spaces $H^{0,\rho}$ is denoted, as usual, by $H^\rho$, and $\caH_{r,0}$, 
	the space of spatial moments up to order $r\in\N$, is denoted by $W^r$.
	\end{itemize}
\end{remark}

In the linear deterministic case, that is, for $\sigma=\gamma\equiv0$, the existence of a unique solution to the Cauchy problem \eqref{eq:SPDE} and the evolution of its solution 
has been fully described.
\begin{theorem}[{\cite[Page XX-12]{craig}}]\label{T14craig}
Under the assumptions (1)-(5), the solution $u(t)$ to the Cauchy problem \eqref{eq:SPDE} with $\sigma=\gamma\equiv 0$ satisfies the estimate
\[\|u(t)\|_{\mathcal H_{z,\zeta}(\R^d)}\leq e^{C_{z,\zeta}t}\|u_0\|_{\mathcal H_{z,\zeta}(\R^d)},\quad t\in[0,T_0], \]
for $T_0\in(0,T]$ and a positive constant $C_{z,\zeta}$ depending only on $z,\zeta\in\N$.
\end{theorem}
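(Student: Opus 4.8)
The plan is to prove the estimate by the \emph{energy method}, turning the homogeneous problem into a first order evolution equation and differentiating a suitable energy functional. Writing $A(x,D)=\Op(a)+\Op(a_1)+\Op(m_1)+\Op(m_0)$, the equation $P(x,\partial_t,\partial_x)u=0$ reads $\partial_t u=\ii A(x,D)u$, and $u(t)=S(t)u_0$ is its solution. The structural observation on which everything rests is that the second order part in divergence form, $L:=\frac12\sum_{j,\ell}\partial_{x_j}(a_{j\ell}\partial_{x_\ell})=\Op(a)+\Op(a_1)$, is \emph{formally self-adjoint} on $L^2(\R^d)$ (by $a_{j\ell}=a_{\ell j}$ and integration by parts), and that $m_1$ is real-valued. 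By the symbolic calculus this forces the anti-self-adjoint part $A-A^\ast$ to be of lower order: $\Op(m_1)-\Op(m_1)^\ast\in\Op(S^{-1,0})$ and $\Op(m_0)-\Op(m_0)^\ast\in\Op(S^{-1,-1})$, both bounded on $L^2$. Hence $\frac{d}{dt}\|u(t)\|_{L^2}^2=-2\,\Im\scal{A(x,D)u}{u}=-\scal{\tfrac1\ii(A-A^\ast)u}{u}\le C\|u(t)\|_{L^2}^2$, and Gronwall's lemma yields the claim for $z=\zeta=0$, i.e.\ on $H^{0,0}=L^2$.

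For the full scale I would run the same computation on the energy $E(t)^2:=\|u(t)\|_{\HzzetaRd}^2$, realized (up to equivalent norms) as $\sum_{j=0}^z\scal{B_j u}{u}$ with the positive self-adjoint weights $B_j:=\Op(\langle x\rangle^{2(z-j)}\langle\xi\rangle^{2(j+\zeta)})$. Since the $B_j$ do not depend on $t$, one gets $\frac{d}{dt}E(t)^2=-2\sum_{j=0}^z\scal{\Im(B_jA)u}{u}$, where $\Im(B_jA)=\frac1{2\ii}(B_jA-A^\ast B_j)$ is self-adjoint. Inserting $A=L+\Op(m_1)+\Op(m_0)$ and using $L=L^\ast$, $B_j=B_j^\ast$ splits each summand as $B_jA-A^\ast B_j=[B_j,L]+\big(B_j\Op(m_1)-\Op(m_1)^\ast B_j\big)+\big(B_j\Op(m_0)-\Op(m_0)^\ast B_j\big)$. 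The last two brackets are harmless: by the calculus both lie in $\Op(S^{2(z-j)-1,2(j+\zeta)})$, whose quadratic form is dominated, after one Cauchy--Schwarz, by $\|u\|_{H^{z-j,j+\zeta}}^2$ and hence by $E(t)^2$.

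The whole difficulty is thus concentrated in the commutator $[B_j,L]\in\Op(S^{2(z-j)-1,2(j+\zeta)+1})$, with leading symbol $\tfrac1\ii\{b_j,a\}$, $b_j=\langle x\rangle^{2(z-j)}\langle\xi\rangle^{2(j+\zeta)}$: it gains one power of $\langle x\rangle^{-1}$ (thanks to asymptotic flatness, $\partial_x a\in S^{-1,2}$) but \emph{loses one $\xi$-derivative}, so it is \emph{not} controlled by $\|u\|_{H^{z-j,j+\zeta}}^2$ alone. This loss of derivatives is the well-known obstruction for Schr\"odinger-type equations and is precisely the reason for working in the intersection space $\HzzetaRd$. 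For the interior indices $0\le j<z$ I would absorb it through the \emph{cascade} produced by Cauchy--Schwarz, $|\scal{[B_j,L]u}{u}|\lesssim\|u\|_{H^{z-j,j+\zeta}}\,\|u\|_{H^{z-(j+1),(j+1)+\zeta}}$, i.e.\ the loss at level $j$ is tied to the \emph{next} component of the intersection; summing in $j$ and applying Young's inequality telescopes these contributions into $C\,E(t)^2$. The genuinely hard point is the top level $j=z$ (the unweighted space $H^{0,z+\zeta}$), where the cascade would reach outside the intersection: there the surviving term is $\partial_\xi b_z\cdot\partial_x a$, and to close the estimate without loss I would exploit the full force of the hypotheses \eqref{ctr:aorderzero}--\eqref{ctr:ell} --- self-adjointness of $L$, reality of $m_1$, and the asymptotic flatness/non-trapping of $\mathfrak a$ --- to build a \emph{bounded pseudodifferential symmetrizer} $e^{\Op(k)}$, $k\in S^{0,0}$ (a Doi/Mizohata-type escape function), whose commutator with $A$ cancels the leading loss $-\tfrac12\{b_z,a\}$. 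Because $k\in S^{0,0}$, the modified energy $\sum_j\scal{e^{\Op(k)}B_je^{\Op(k)}u}{u}$ is equivalent to $E(t)^2$, so the corrected differential inequality still reads $\frac{d}{dt}E(t)^2\le 2C_{z,\zeta}E(t)^2$, and Gronwall's lemma gives $\|u(t)\|_{\HzzetaRd}\le e^{C_{z,\zeta}t}\|u_0\|_{\HzzetaRd}$, with $C_{z,\zeta}$ depending only on the symbol seminorms of the coefficients and on $z,\zeta$. Constructing this symmetrizer (equivalently, verifying the Mizohata integrability condition along the Hamiltonian flow of $a$) is the step I expect to be the main obstacle; the sharp G\r{a}rding inequality is the natural auxiliary tool to turn the symbol-level cancellation into the required operator inequality.
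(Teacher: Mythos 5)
First, a point of comparison: the paper does not prove Theorem \ref{T14craig} at all --- it is quoted from Craig \cite{craig}, where the argument is not a static-weight energy estimate but a construction of \emph{microlocal moments}: time-dependent pseudodifferential operators (curved-space analogues of the Galilean operators $x_j+\ii t D_{x_j}$, together with powers of the generator itself) that commute with the evolution modulo controllable errors. Your attempt must therefore stand on its own. Its first two steps do: the $L^2$ bound via quasi-skew-adjointness of $A$ is correct, and so is the cascade for the intermediate indices $0\le j<z$, since $[B_j,L]\in\Op(S^{2(z-j)-1,2(j+\zeta)+1})$ factors (modulo lower order) into operators of orders $(z-j,j+\zeta)$ and $(z-j-1,j+\zeta+1)$, so Cauchy--Schwarz ties level $j$ to level $j+1$ inside the intersection $\Hzzeta$.

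The genuine gap is exactly where you located it, the top level $j=z$, and the fix you propose cannot work. If $k\in S^{0,0}$, then $\{a,k\}=\partial_\xi a\cdot\partial_x k-\partial_x a\cdot\partial_\xi k\in S^{-1,1}$, so the most positivity a bounded symmetrizer can generate is of size $\langle x\rangle^{-1}\langle\xi\rangle$; but to cancel or absorb the surviving term $\partial_\xi b_z\cdot\partial_x a$, of size $\langle x\rangle^{-1}\langle\xi\rangle^{2(z+\zeta)+1}$, you need a lower bound $\{a,k\}\ge c\,\langle x\rangle^{-1}|\xi|$ \emph{globally along the flow}, and this borderline rate is precisely where Doi/Mizohata escape functions cease to exist in bounded form: the standard construction yields $\{a,k\}\gtrsim\langle x\rangle^{-1-\delta}|\xi|$ only for $\delta>0$, while $\delta=0$ forces $k\sim\log\langle x\rangle\notin S^{0,0}$ (equivalently, solving the transport equation $H_ak=-\tfrac12\{b_z,a\}/b_z$ by integrating along bicharacteristics produces a logarithmically divergent integral, since $\langle x(t)\rangle^{-1}$ is not integrable in $t$). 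An unbounded $k$ destroys the equivalence of your modified energy with $E(t)^2$, so Gronwall no longer yields the stated estimate. Moreover, any escape-function argument presupposes that the Hamiltonian flow of $a$ is non-trapping, which is \emph{not} among hypotheses (1)--(5): asymptotically flat metrics may trap geodesics, and the theorem --- a fixed-time growth bound, not a smoothing estimate --- holds for them nonetheless. So your scheme would prove a different statement under strictly stronger hypotheses, and even then would not close at the borderline weight.

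The missing idea is to abandon static weights at the top level and use operators that commute with the propagator \emph{exactly}. Since $A$ is time-independent, $A^je^{\ii tA}=e^{\ii tA}A^j$, and ellipticity of $a$ in $\xi$ gives the norm equivalence $\|v\|_{H^{0,2k}}\simeq\sum_{j\le k}\|A^jv\|_{L^2}$; combined with your $L^2$ bound this controls the component $H^{0,z+\zeta}$ with no derivative loss and no trapping assumption. The weighted components then require the Heisenberg-evolved moments (quantizations of $x\circ\Phi_{-t}$, with $\Phi_t$ the Hamiltonian flow of $a$), whose commutators with $P$ are of controllable order for bounded times; this is also what explains the intersection structure of $\Hzzeta$ --- weights at time $t$ are exchanged for derivatives at time $0$, exactly as $x+\ii tD$ does in the flat case --- and it is, in substance, Craig's proof, which the paper cites rather than reproduces.
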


\begin{remark}\label{S}
As a consequence of Theorem \ref{T14craig}, the \textit{propagator} $S$ (or, equivalently, the fundamental solution) of $P$ defines continuous maps $S(t):\Hzzeta\rightarrow \Hzzeta$, whose norms can be bounded by $e^{C_{z,\zeta}t}$, $t\in[0,T_0]$, $z,\zeta\in\N$.
\end{remark}

\begin{definition}\label{def:lip} Given $z\in\N$, $\zeta\in[0,+\infty)$, $\Lip(z,\zeta)$ is the set of all measurable functions 
$g:[0,T]\times\R^d\times\C\longrightarrow\C$
such that there exists a real-valued, non negative,
$C_t=C(t)\in C([0,T])$, such that:
\begin{itemize}
\item for every $v\in \HzzetaRd$ and $t\in[0,T]$, we have $\|g(t,\cdot,v)\|_{\HzzetaRd}\leq C(t)\left[1+\|v\|_{\HzzetaRd}\right]$;
%
%\[
%	\|g(t,\cdot,v)\|_{\HzzetaRd}\leq C(t)\left[1+\|v\|_{\HzzetaRd}\right];
%\]
%
\item for every $v_1,v_2\in \HzzetaRd$ and $t\in[0,T]$, we have $\|g(t,\cdot,v_1)-g(t,\cdot,v_2)\|_{\HzzetaRd}\leq C(t)\|v_1-v_2\|_{\HzzetaRd}$.
\end{itemize}
\end{definition}
\noindent
More generally, we say that $g\in\Liploc(z,\zeta)$ if the stated properties hold true for $v,v_1,v_2\in U$,  with $U$ a suitable open subset of $\HzzetaRd$ 
(typically, a sufficiently small neighbourhood of the initial data of the Cauchy problem).

\medskip

The main result of the present paper is the following one, see Section \ref{sec:stochastics} and Theorem \ref{main} for the details:

\medskip

\emph{%
\noindent
Let us consider the Cauchy problem \eqref{eq:SPDE} for a 
Schr\"odinger type operator \eqref{operator} under assumptions (1)-(5), and suppose $u_0\in \HzzetaRd$, $z,\zeta\in\N$. Moreover, assume that $\gamma,\sigma\in\Liploc(z,\zeta)$ in some open subset $U\subset \HzzetaRd$ with $u_0 \in U$, and 
         \beqs\label{condmeas}
         \int_\Rd {\mathfrak{M}(d\xi)}<\infty.
	\eeqs
Then, there exists a time horizon $0< T_0\leq T$ such that the Cauchy problem
\eqref{eq:SPDE} admits a unique solution $u\in L^2([0,T_0]\times\Omega, \mathcal H_{z,\zeta}(\R^d))$ satisfying \eqref{eq:mildsolutionSPDE}, where the first integral is a Bochner integral, and the second integral is understood as stochastic integral of a suitable $\mathcal H_{z,\zeta}(\R^d)$-valued stochastic process with respect to a cylindrical Wiener process associated with the stochastic noise $\Xi$.
}

\begin{example}\label{ex:power}
	$g(t, x, u)=u^n$, $n\in\N$, is an admissible non-linearity in the Cauchy problem \eqref{eq:SPDE} when $\zeta>d/2$. In fact, $g\in\Liploc(z,\zeta)$, 
	when $z\in\N$, $\zeta>d/2$, since, if $v\in \Hzzeta$ is such that $\|v\|_{\Hzzeta}\le R$, then
	\begin{equation}\label{eq:powerliploc}
		\|g(t,x,v)\|_{\Hzzeta}=\| v^n \|_{\Hzzeta}\le \widetilde{C} R^{n-1} \|v\|_{\Hzzeta}
		\le \left( \widetilde{C} R^{n-1} \right) \left[1+\|v\|_{\Hzzeta}\right].
	\end{equation}
	Indeed, $r=z-j\ge0$ and $\rho=\zeta+j>d/2$, $j=0,\dots,z$, and $\|v\|_{\Hzzeta}\le R\Rightarrow \|v\|_{H^{r,\rho}}\le R$,
	imply, for the algebra properties of the weighted Sobolev spaces,
	$$
		\| v^n \|_{H^{r,\rho}}\le C_{nr\rho} \| v^n \|_{H^{nr,\rho}}\le 
		C_{nr\rho} \|v \|_{H^{r,\rho}}^n\le C_{nr\rho} R^{n-1}\|v\|_{H^{r,\rho}},
	$$
	and \eqref{eq:powerliploc} immediately follows, by the definition \eqref{eq:normHpint} of the $\Hzzeta$ norm.
	The second requirement in Definition \ref{def:lip} follows by similar considerations, by the Mean Value Theorem on $g$.
\end{example}

Comparing our result with \cite{DbD99, DbD2002, DbD2005, oh}, we observe that there the focus is on the flat Schrödinger operator $P=i\partial_t-\triangle$, the noise is a real-valued Gaussian process and the solution takes values in some $L^p$-modeled Sobolev spaces, not necessarily of Hilbert type. Conversely, here we deal with the Schrödinger operator $P=i\partial_t-\triangle_\mathfrak g$, associated with an asymptotically flat metric $\mathfrak g$, we allow the noise to be a distribution-valued Gaussian process, and we look for solutions in certain weighted $L^2$-modeled Sobolev spaces. Moreover, the assumptions in  \cite{DbD99, DbD2002, DbD2005, oh} require that the noise is of Hilbert-Schmidt (or radonifying) type, while here we provide a condition on the noise (precisely, on its spectral measure) so that such property holds true, in analogy with the approach we followed in \cite{ACS19b, semilinearpara}, inspired by \cite{peszat}. 
 
The paper is organized as follows. In Section \ref{sec:stochastics} we recall the basic elements of the stochastic 
integration that we need; these materials have appeared, in slightly different forms, e.g. in \cite{ACS19b,  semilinearpara}. Section \ref{sec:nonlin} is devoted to proving our main result.

\section*{Acknowledgement}{This research has been partially supported by the first author's INdAM-GNAMPA Project 2020.}

%%%%%%%%%%%%%%%%%%%%%%%%%%%%%%%%%%%%%%%%%%%%%%%%%%%%%%%
%																											%
%	SECTION 2																						%
%																											%
%%%%%%%%%%%%%%%%%%%%%%%%%%%%%%%%%%%%%%%%%%%%%%%%%%%%%%%
%
\section{Stochastic integration.}\label{sec:stochastics}

\subsection{Stochastic integration with respect to a cylindrical Wiener process.} 
%Also, we recall the definition of the Hilbert space $\mathcal H$ which will be suitable for our purposes of function-valued solutions to SPDEs. For the latter, we follow the exposition in \cite{dalangquer}. Finally, we introduce the Cameron-Martin space associated with the gaussian process $\Xi$.

\begin{definition}\label{cWp}
  Let $Q$ be a self-adjoint, nonnegative definite and bounded linear operator on a separable Hilbert space $H$. An $H$-valued stochastic process $W = \{W_t(h); h\in H, t\geq0\}$ is called a {\em cylindrical Wiener process on $H$} on the complete probability space $(\Omega,\scrF,\P)$ if the following conditions are fulfilled:
  \begin{enumerate}
    \item for any $h\in H$, $\{W_t(h); t\geq0\}$ is a one-dimensional Brownian motion with variance $t\langle Qh,h\rangle_H$;
    \item for all $s,t\geq0$ and $g,h\in H$,
    \[ \E[W_s(g)W_t(h)] = (s\wedge t)\langle Qg,h\rangle_H. \]
  \end{enumerate}
  If $Q=Id_H$, then $W$ is called a standard cylindrical Wiener process.
\end{definition}

Let $\scrF_t$ be the $\sigma$-field generated by the random variables $\{W_t(h); 0\leq s\leq t, h\in H\}$ and the $\P$-null sets. The predictable $\sigma$-field is then the $\sigma$-field in $[0,T]\times\Omega$ generated by the sets $\{(s,t]\times A, A\in\scrF_t, 0\leq s<t\leq T\}$.

We define $H_Q$ to be the completion of the Hilbert space $H$ endowed with the inner product
\[ \langle g,h\rangle_{H_Q} := \langle Qg,h\rangle_H, \]
for $g,h\in H$. In the sequel, we let $\{v_k\}_{k\in\N}$ be a complete orthonormal basis of $H_Q$. Then, the stochastic integral of a predictable, square-integrable stochastic process with values in $H_Q$, $u\in L^2([0,T]\times\Omega; H_Q)$, is defined as
\[ \int_0^t u(s)dW_s := \sum_{k\in\N} \langle u,v_k\rangle_{H_Q} dW_s(v_k). \]
In fact, the series in the right-hand side converges in $L^2(\Omega,\scrF,\P)$ and its sum does not depend on the chosen orthonormal system
$\{v_k\}_{k\in\N}$. Moreover, the It\^o isometry
\[ \E\bigg[\bigg(\int_0^t u(s)dW_s\bigg)^2\bigg] = \E\bigg[\int_0^t \|u(s)\|_{H_Q}^2 ds\bigg] \]
holds true for any $u\in L^2([0,T]\times\Omega;H_Q)$.
%For more on one-dimensional integration, see, e.g., \cite{oksendal}.

This notion of stochastic integral can also be extended to operator-valued integrands. Let $\caH$ be a separable Hilbert space and consider $L_2(H_Q,\caH)$, the space of Hilbert-Schmidt operators from $H_Q$ to 
$\caH$. With this we can define the space of integrable processes (with respect to $W$) as the set of $\scrF$-measureable processes in $L^2([0,T]\times\Omega;L_2(H_Q,\caH))$. Since one can identify the Hilbert-Schmidt operators in $L_2(H_Q,\caH)$ with $\caH\otimes H_Q^*$, one can define the stochastic integral for any $u\in L^2([0,T]\times\Omega;L_2(H_Q,\caH))$ coordinatewise in $\caH$. Moreover, it is possible to establish an It\^o isometry, namely,
\beqs\label{isomhilb}
\E\Bigg[\bigg\|\int_0^t u(s)dW_s\bigg\|_\caH^2\Bigg] := \int_0^t \E\big[\|u(s)\|_{L_2(H_Q,\caH)}^2\big] ds.
\eeqs

%\subsection{The noise and the concept of solution to \eqref{eq:SPDE}}\label{sec:noise}
\subsection{The noise term}\label{sec:noise}
 In this paper we consider a distribution-valued
Gaussian process $\{\Xi(\phi);\; \phi\in\mathcal{C}_0^\infty(\mathbb{R}_+\times\Rd)\}$ on a complete probability space $(\Omega, \scrF, \P)$,
with mean zero and covariance functional given by
\begin{equation}
	\E[\Xi(\phi)\Xi(\psi)] = \int_0^\infty\int_\Rd \big(\phi(t)\ast\tilde{\psi}(t)\big)(x)\,\Gamma(dx) dt,
	\label{eq:correlation}
\end{equation}
where $\tilde{\psi}(t,x) := (2\pi)^{-d}\,\overline{\psi(t,-x)}$, $\ast$ is the convolution operator and $\Gamma$ is a nonnegative, nonnegative definite, tempered measure on $\Rd$ usually called \emph{correlation measure}.
Then \cite[Chapter VII, Th\'{e}or\`{e}me XVIII]{schwartz} implies that there exists a nonnegative tempered measure $\mathfrak{M}$ on $\Rd$, usually called \emph{spectral measure}, such that $\caF\Gamma = \widehat{\Gamma}=\mathfrak{M}$, where $\caF$ and $\widehat{\phantom d}$ denote the Fourier transform.
By Parseval's identity, the right-hand side of \eqref{eq:correlation} can be rewritten as
\begin{equation*}
	\E[\Xi(\phi)\Xi(\psi)] = \int_0^{\infty}\int_{\Rd}[\caF\phi(t)](\xi)\
	\cdot
	\overline{[\caF\psi(t)](\xi)}\,\mathfrak{M}(d\xi) dt.
\end{equation*}
\subsection{The Cameron-Martin space associated with the noise}\label{sec:CMspace}
To give a meaning to the stochastic integral appearing in \eqref{eq:mildsolutionSPDE}
as a stochastic integral with respect to a cylindrical Wiener process on a suitable Hilbert space, we need to understand the noise $\Xi$ in terms of a canonically associated Hilbert space $\mathcal H_\Xi$, the so-called Cameron-Martin space associated with $\Xi$.
\begin{definition}
The Cameron-Martin space associated with $\Xi$ is the set
\begin{equation}\label{cammart}
\mathcal H_\Xi=\{\widehat{\varphi\mathfrak{M}}\colon\varphi\in L^2_{\mathfrak{M},s}(\R^d)\},
\end{equation}
where $L^2_{\mathfrak{M},s}(\R^d)$ is the space of symmetric functions in $L^2_\mathfrak{M}(\R^d)$, i.e.
$\check{\varphi}(x)=\varphi(-x)=\varphi(x)$, $x\in\R^d$, and $\ds\int_{\R^d}|\varphi(x)|^2\,\mathfrak{M}(dx)<\infty$.
\end{definition}
Clearly, $\mathcal H_\Xi\subset\mathcal S'(\R^d)$. The Cameron-Martin space $\mathcal H_\Xi$, endowed with the inner product
\[
\langle\widehat{\varphi\mathfrak{M}},\widehat{\psi\mathfrak{M}}\rangle_{\mathcal H_\Xi}:=\langle\varphi,\psi\rangle_{L^2_{\mathfrak{M},s}(\R^d)}, \quad \forall\varphi,\psi\in L^2_{\mathfrak{M},s}(\R^d),
\]
and the corresponding norm \[||\widehat{\varphi\mathfrak{M}}||_{\mathcal H_\Xi}^2=||\varphi||_{L^2_{\mathfrak{M},s}(\R^d)}^2,\]
turns out to be a real separable Hilbert space, see \cite[Propostition 2.1]{peszat}. Thus, $\Xi$ is a cylindrical  Wiener process $W$
on $(\mathcal H_\Xi,\langle\cdot,\cdot\rangle_{\mathcal H_\Xi})$ which takes values in any Hilbert space $\caH$ such that the embedding $\caH_\Xi\hookrightarrow \caH$ is an Hilbert-Schmidt map.
%
%
%Then, we need to understand the noise $\Xi$ in terms of a canonically associated Hilbert space $\mathcal H_\Xi$, so that we can make sense of the stochastic integral appearing in \eqref{eq:rigorous_solution} as a stochastic integral with respect to a cylindrical Wiener process on $\mathcal H_\Xi$: this is done in the following Lemma \ref{lem:weightedpesz2}. Finally, the application of a fixed point scheme will conclude the proof of our main result.

\subsection{The concept of solution to \eqref{eq:SPDE}}\label{sec:rigorous_solution}
With the preparation above, we can now give the precise meaning of solution to \eqref{eq:SPDE}.
\begin{definition}\label{def:rigorous_solution}
We call \textit{(mild) function-valued solution to \eqref{eq:SPDE}} an $L^2(\Omega,\caH)$-family of random elements $u(t)$, %SC
satisfying the stochastic integral equation
\begin{equation}
\label{eq:rigorous_solution}
	u(t) = S(t)u_0 -i \int_0^t S(t-s)\gamma(s,u(s))ds - i\int_0^t S(t-s)\sigma(s,u(s))dW_s,
\end{equation}
for all $t\in[0,T_0]$, $x\in\R^d$, where $T_0\in(0, T]$ is a suitable time horizon, 
$v_0(t):=S(t)u_0\in\caH$ for $t\in[0,T_0]$, $S(t)$ is the propagator of $P$ provided by Theorem \ref{T14craig}, $\gamma$ and $\sigma$ are nonlinear operators defined by the so-called Nemytskii operators associated 
with the functions $\gamma$ and $\sigma$ in \eqref{eq:SPDE}, and $W$ is the cylindrical Wiener process associated with $\Xi$, defined in Section \ref{sec:CMspace}.
\end{definition}

\begin{remark}\label{rem:peslem} 
It follows that, to ensure that the stochastic integral appearing in \eqref{eq:rigorous_solution} makes sense, it will be enough to verify that 
$\{S(t-\cdot)\sigma(\cdot,u(\cdot))\in L^2([0,t]\times\Omega; L_2(\mathcal H_\Xi, \caH)$ for a suitable separable Hilbert space  $\caH$.
\end{remark}

%%%%%%%%%%%%%%%%%%%%%%%%%%%%%%%%%%%%%%%%%%

%SECTION PESZAT

%%%%%%%%%%%%%%%%%%%%%%%%%%%%%%%%%%%%%%%%%%

%
\section{function-valued solutions for semilinear SPDEs of Schr\"odinger type.}\label{sec:nonlin}

This section is devoted to precisely state and prove the main result of this paper, which provides the existence and uniqueness of a function-valued solution \eqref{eq:rigorous_solution} to the Cauchy problem \eqref{eq:SPDE} for a Schr\"odinger type equation on a curved space, under suitable conditions on the coefficients of $P$, the drift and diffusion term, the initial data, the noise $\Xi$ (i.e., its spectral measure $\mathfrak{M}$). The statement is the next Theorem \ref{main}.
\begin{theorem}\label{main}
Let us consider the Cauchy problem \eqref{eq:SPDE} for a Schr\"odinger type operator under assumptions (1)-(5)
with $u_0\in \HzzetaRd$, $z,\zeta\in\N$.
	Assume that $\gamma,\sigma\in\mathrm{Lip_{loc}}(z,\zeta)$ in some open subset  $U\subset \HzzetaRd$ with $u_0 \in U$, and
	\begin{equation}\label{eq:meascm2}
  		\int_\Rd\mathfrak{M}(d\xi)< \infty.
	\end{equation}
	%
%\end{enumerate}
%
Then, there exists a time horizon $0< T_0\leq T$ such that the Cauchy problem \eqref{eq:SPDE} admits a unique
 solution $u\in L^2([0,T_0]\times\Omega, \HzzetaRd)$ in the sense of Definition \ref{def:rigorous_solution}. That is, 
$u$ satisfies \eqref{eq:rigorous_solution} for all $t\in[0,T_0]$, where $S(t)$ is the propagator of $P$, the first integral in \eqref{eq:rigorous_solution} is a Bochner integral, and the second integral in \eqref{eq:rigorous_solution} is understood as stochastic integral of the $\HzzetaRd$-valued stochastic process $S(t-\ast)\sigma(\ast,u(\ast))$ with respect to the stochastic noise $\Xi$.
\end{theorem}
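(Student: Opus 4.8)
The plan is to recast the integral equation \eqref{eq:rigorous_solution} as a fixed point problem and to apply the Banach contraction principle in a closed ball of $\mathcal{B}:=L^2([0,T_0]\times\Omega,\HzzetaRd)$. First I would introduce the map
\[
\mathcal{L}(u)(t):=S(t)u_0-i\int_0^tS(t-s)\gamma(s,u(s))\,ds-i\int_0^tS(t-s)\sigma(s,u(s))\,dW_s,
\]
together with the free evolution $v_0(t):=S(t)u_0$. Since $u_0\in U$, $U$ is open, and $t\mapsto v_0(t)$ is continuous into $\HzzetaRd$ with $v_0(0)=u_0$, for $T_0$ small the curve $\{v_0(t):t\in[0,T_0]\}$ stays in $U$; I would then search for the fixed point in a closed ball $\overline{B}(v_0,R)\subset\mathcal{B}$ chosen so small that its elements take values in $U$, where the $\Liploc(z,\zeta)$-bounds of Definition \ref{def:lip} are available with the uniform constant $C:=\max_{[0,T_0]}C(t)$.

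The core of the argument is the estimate of the stochastic term. By the It\^o isometry \eqref{isomhilb} one has, for fixed $t$,
\[
\E\Big\|\int_0^tS(t-s)\sigma(s,u(s))\,dW_s\Big\|_{\HzzetaRd}^2=\int_0^t\E\big\|S(t-s)\circ M_{\sigma(s,u(s))}\big\|_{L_2(\mathcal{H}_\Xi,\HzzetaRd)}^2\,ds,
\]
where $M_f$ is multiplication by $f$. Using the Hilbert-Schmidt ideal property $\|S(t-s)\circ M_f\|_{L_2}\le\|S(t-s)\|_{\mathrm{op}}\,\|M_f\|_{L_2}$ and the propagator bound $\|S(\tau)\|_{\mathrm{op}}\le e^{C_{z,\zeta}\tau}$ of Remark \ref{S}, matters reduce to controlling the Hilbert-Schmidt norm of $M_f$ for $f\in\HzzetaRd$. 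Expanding along an orthonormal basis $\{v_k\}$ of $\mathcal{H}_\Xi$ coming, via \eqref{cammart}, from an orthonormal basis $\{\varphi_k\}$ of $L^2_{\mathfrak{M},s}$, the Parseval identity applied to the exponentials $\xi\mapsto e^{ix\xi}$ gives the pointwise relation $\sum_k|\widehat{\varphi_k\mathfrak{M}}(x)|^2=\int_\Rd\mathfrak{M}(d\xi)$: this is exactly where hypothesis \eqref{eq:meascm2} is used, and it leads to a bound of the form
\[
\|M_f\|_{L_2(\mathcal{H}_\Xi,\HzzetaRd)}^2\le C\,\|f\|_{\HzzetaRd}^2\int_\Rd\mathfrak{M}(d\xi),
\]
the weights and derivatives defining the $\HzzetaRd$-norm being absorbed by the algebra/multiplier structure of the Sobolev-Kato scale (cf.\ Remark \ref{rem:Hpint} and \cite{peszat}). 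Together with the linear-growth bound $\|\sigma(s,u(s))\|_{\HzzetaRd}\le C(1+\|u(s)\|_{\HzzetaRd})$, this controls the stochastic term by $C\,T_0\,\big(\int_\Rd\mathfrak{M}\big)\,(1+\|u\|_{\mathcal{B}}^2)$, with a factor that vanishes as $T_0\to0$.

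The two remaining terms are easier. The free term satisfies $\|v_0(t)\|_{\HzzetaRd}\le e^{C_{z,\zeta}T_0}\|u_0\|_{\HzzetaRd}$ by Remark \ref{S}, so $v_0\in\mathcal{B}$. For the drift term, a Bochner integral, the propagator bound and the linear growth of $\gamma$, after a Cauchy-Schwarz step in time, give a contribution of order $T_0(1+\|u\|_{\mathcal{B}}^2)$. Since $\mathcal{L}(u)-v_0$ equals the drift plus the stochastic integral, collecting the estimates shows that, for suitable $R$ and $T_0$, $\mathcal{L}$ maps $\overline{B}(v_0,R)$ into itself. Running the same computation with the Lipschitz bounds of Definition \ref{def:lip} in place of the linear-growth ones yields
\[
\|\mathcal{L}(u_1)-\mathcal{L}(u_2)\|_{\mathcal{B}}\le\kappa(T_0)\,\|u_1-u_2\|_{\mathcal{B}},\qquad\kappa(T_0)\xrightarrow[T_0\to0]{}0,
\]
so $\mathcal{L}$ is a contraction once $T_0$ is small enough, and the Banach fixed point theorem delivers the unique $u\in\overline{B}(v_0,R)$ solving \eqref{eq:rigorous_solution}.

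The hard part is the Hilbert-Schmidt estimate for $M_{\sigma(s,u(s))}$, that is, the verification of the requirement isolated in Remark \ref{rem:peslem}: this is where the compatibility condition \eqref{eq:meascm2} between the noise and the equation enters essentially, and where the algebra structure of the scale must be invoked to cover the whole $\HzzetaRd$-norm and not merely its $L^2$-component. A secondary, purely technical, difficulty is the localization forced by the $\Liploc$ hypothesis: I would guarantee that the iterates stay in $U$ either by shrinking $R$ and $T_0$ as above or, more robustly, by truncating $\gamma,\sigma$ to globally Lipschitz maps and later removing the truncation through a stopping-time argument.
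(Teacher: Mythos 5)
Your overall architecture coincides with the paper's: recast \eqref{eq:rigorous_solution} as a fixed point problem for $\mathcal{T}u=v_0-i\int_0^tS(t-s)\gamma\,ds-i\int_0^tS(t-s)\sigma\,dW_s$ in $L^2([0,T_0]\times\Omega,\HzzetaRd)$, use the propagator bound of Remark \ref{S}, the It\^o isometry \eqref{isomhilb}, a Peszat-type Hilbert--Schmidt estimate for the multiplication operator (the paper's Lemma \ref{lem:weightedpesz2}), and Banach's fixed point theorem for $T_0$ small. The plan is the right one, but its crucial step is not actually carried out, and the route you sketch for it would fail.

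The gap is exactly in what you call the hard part. Your pointwise identity $\sum_k|\widehat{\varphi_k\mathfrak{M}}(x)|^2=\int_\Rd\mathfrak{M}(d\xi)$ is in fact only an inequality $\le$ (Bessel: $\{\varphi_k\}$ is an orthonormal basis of the proper subspace $L^2_{\mathfrak{M},s}$, and the symmetric part of $e^{ix\cdot}$ has squared norm $\int\cos^2(x\xi)\,\mathfrak{M}(d\xi)$); that by itself is harmless. What is not harmless is that this pointwise bound only controls the derivative-free components of the $\HzzetaRd$-norm: it yields $\sum_k\|\langle\cdot\rangle^{r}f\,\widehat{\varphi_k\mathfrak{M}}\|^2_{L^2}\le\mathfrak{M}(\Rd)\|f\|^2_{H^{r,0}}$, but in the components $\|\langle\cdot\rangle^{z-j}\langle D\rangle^{j+\zeta}(f\,\widehat{\varphi_k\mathfrak{M}})\|_{L^2}$ derivatives also fall on the basis functions, and since $\partial^\beta\widehat{\varphi_k\mathfrak{M}}=\widehat{(-i\xi)^\beta\varphi_k\mathfrak{M}}$, applying Bessel to those terms produces factors of the form
\begin{equation*}
\int_\Rd|\xi|^{2|\beta|}\,\mathfrak{M}(d\xi),\qquad 0<|\beta|\le j+\zeta,
\end{equation*}
i.e.\ moments of the spectral measure, which hypothesis \eqref{eq:meascm2} does not control. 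So ``absorbed by the algebra/multiplier structure of the scale'' is not a proof; it is precisely where the compatibility between the noise and the scale $\HzzetaRd$ has to be established. The paper's Lemma \ref{lem:weightedpesz2} proceeds so that the basis elements are never differentiated: it moves to the Fourier side, writes $\mathcal F\big((\langle\cdot\rangle^{z-k}\langle D\rangle^{k+\zeta}\sigma)\,\widehat{f_j\mathfrak{M}}\big)$ as the convolution $\mathcal F\big(\langle\cdot\rangle^{z-k}\langle D\rangle^{k+\zeta}\sigma\big)\ast(f_j\mathfrak{M})$ (using the symmetry of $f_j$), applies Bessel's inequality in $L^2_{\mathfrak{M}}$ with the whole Sobolev--Kato weight attached to $\sigma$, and then integrates in $\xi$ via Fubini, so that only the total mass $\int_\Rd\mathfrak{M}(d\xi)$ ever appears. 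An argument of this kind (or an explicit moment assumption on $\mathfrak{M}$) is what your step is missing.

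A secondary gap is the localization. A closed ball $\overline B(v_0,R)$ of $L^2([0,T_0]\times\Omega,\HzzetaRd)$ can never be ``chosen so small that its elements take values in $U$'': smallness in $L^2$ does not constrain pointwise values, so for every $R>0$ that ball contains processes leaving $U$ on an exceptional set, where the $\Liploc$ bounds of Definition \ref{def:lip} are unavailable. The paper instead first proves the theorem under the global assumption $\gamma,\sigma\in\Lip(z,\zeta)$ and then, in the $\Liploc$ case, runs the contraction in $L^2([0,T_0]\times\Omega,\BBB)$, where $\BBB\subset U$ is a fixed closed ball of $\HzzetaRd$ centred at $u_0$; the constraint is imposed pointwise, inside the space, not through the $L^2$-distance to $v_0$. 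Your fallback (truncation of $\gamma,\sigma$ plus a stopping-time argument) is a legitimate alternative, but it is only announced, not executed.
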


The key result to prove Theorem \ref{main} is the next Lemma \ref{lem:weightedpesz2}, a variant of a result due, in its original form given for wave-type equations, to Peszat \cite{peszat}.

\begin{lemma}\label{lem:weightedpesz2}
Let $\sigma\in \Lip(z,\zeta)$, $z,\zeta\in\N$, and let $S(t)$ be the propagator of $P$ provided by Theorem \ref{T14craig}. 
If  the spectral measure satisfies
\eqref{eq:meascm2}, then, for every $w\in \HzzetaRd$, the operator 
\[
\Phi(t,s)=\colon\psi\mapsto S(t-s)\sigma(s,\cdot,w) \psi
\] 
belongs to $L_2(\mathcal H_\Xi, \HzzetaRd)$.
Moreover, the Hilbert-Schmidt norm of $\Phi(t,s)$ can be estimated by
\begin{equation}\label{battezzata2}
\|\Phi(t,s)\|_{L_2(\mathcal H_\Xi, \HzzetaRd)}^2\lesssim
C_{t,s}\left[1+\|w\|_{\HzzetaRd}\right]^2
\int_\Rd \mathfrak{M}(d\xi),
\end{equation}
for $C_{t,s}=e^{2C_{z,\zeta}t}C_s^2$, where $C_s$ is the constant in Definition \ref{def:lip} and $C_{z,\zeta}$ is the constant in Theorem \ref{T14craig}.
\end{lemma}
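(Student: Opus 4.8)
The plan is to estimate the Hilbert--Schmidt norm directly from its definition, by evaluating $\Phi(t,s)$ on a convenient orthonormal basis of $\mathcal H_\Xi$ and summing. First I would fix an orthonormal basis $\{\varphi_k\}_{k\in\N}$ of $L^2_{\mathfrak{M},s}(\R^d)$, so that, by the very definition of the Cameron--Martin inner product, the functions $e_k:=\widehat{\varphi_k\mathfrak{M}}$ form an orthonormal basis of $\mathcal H_\Xi$. Then $\|\Phi(t,s)\|_{L_2(\mathcal H_\Xi,\HzzetaRd)}^2=\sum_{k}\|S(t-s)[\sigma(s,\cdot,w)\,e_k]\|_{\HzzetaRd}^2$. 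Since, by Remark \ref{S}, $S(t-s)$ maps $\HzzetaRd$ into itself with norm bounded by $e^{C_{z,\zeta}(t-s)}\le e^{C_{z,\zeta}t}$, I can bound the propagator termwise and pull it out of the sum, producing the factor $e^{2C_{z,\zeta}t}$ and reducing the problem to the multiplication operator $M\colon\psi\mapsto\sigma(s,\cdot,w)\,\psi$, that is, to the control of $\sum_k\|\sigma(s,\cdot,w)\,e_k\|_{\HzzetaRd}^2$.

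The core of the argument is the identification of this last sum with an integral against $\mathfrak{M}$. Writing $e_k(x)=\langle\varphi_k,e^{\ii x\cdot}\rangle_{L^2_{\mathfrak{M}}}$ and using Parseval's identity in $L^2_{\mathfrak{M},s}(\R^d)$ yields the pointwise relation $\sum_k|e_k(x)|^2=\int_\Rd\mathfrak{M}(d\xi)$, uniformly in $x$; equivalently, regarding $M$ as an operator $L^2_{\mathfrak{M},s}(\R^d)\to\HzzetaRd$ with kernel $K(\cdot,\xi)=\sigma(s,\cdot,w)\,e^{-\ii\,\cdot\,\xi}$, the standard formula for the Hilbert--Schmidt norm of an integral operator gives $\|M\|_{L_2(\mathcal H_\Xi,\HzzetaRd)}^2=\int_\Rd\|\sigma(s,\cdot,w)\,e^{-\ii\,\cdot\,\xi}\|_{\HzzetaRd}^2\,\mathfrak{M}(d\xi)$. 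In either formulation the spectral measure enters precisely through the finiteness hypothesis \eqref{eq:meascm2}. Once this reduction is in place, the growth condition of Definition \ref{def:lip} (first bullet) lets me replace the $\HzzetaRd$-norm of $\sigma(s,\cdot,w)$ by $C_s[1+\|w\|_{\HzzetaRd}]$, which, together with the factor $e^{2C_{z,\zeta}t}$ coming from the propagator, reproduces the constant $C_{t,s}=e^{2C_{z,\zeta}t}C_s^2$ and the overall structure of \eqref{battezzata2}.

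The main obstacle I expect is the multiplication estimate controlling $\|\sigma(s,\cdot,w)\,e^{-\ii\,\cdot\,\xi}\|_{\HzzetaRd}$ in the intersection norm $\sum_{j=0}^z\|\cdot\|_{H^{z-j,j+\zeta}}$, and, above all, the verification that the resulting $\xi$-dependence stays integrable against $\mathfrak{M}$. Here I would argue component by component: on each $H^{z-j,j+\zeta}$ the polynomial weight $\langle x\rangle^{z-j}$ acts only on $\sigma(s,\cdot,w)\in\HzzetaRd\subset H^{z,\zeta}(\R^d)$, which carries the required spatial decay, while the frequency weight must be distributed, via Leibniz' rule and the algebra property of the weighted Sobolev spaces for $\zeta>d/2$ (Remark \ref{rem:Hpint}(i) and Example \ref{ex:power}), onto the smooth factors; the plane wave $e^{-\ii\,\cdot\,\xi}$ only shifts frequencies, so one invokes Peetre's inequality to handle its contribution. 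The delicate bookkeeping is to organise all derivatives so that the quantity left to be integrated against $\mathfrak{M}(d\xi)$ is governed by \eqref{eq:meascm2}; summing the finitely many components of the intersection norm and inserting the growth bound for $\sigma(s,\cdot,w)$ then yields \eqref{battezzata2}, and in particular shows $\Phi(t,s)\in L_2(\mathcal H_\Xi,\HzzetaRd)$.
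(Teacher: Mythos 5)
Your opening reductions coincide with the paper's own proof: there too one fixes the orthonormal basis $\{\widehat{f_j\mathfrak{M}}\}_{j\in\N}$ of $\mathcal{H}_\Xi$ induced by an orthonormal basis $\{f_j\}_{j\in\N}$ of $L^2_{\mathfrak{M},s}$, and uses Remark \ref{S} to extract the factor $e^{2C_{z,\zeta}t}$, reducing the problem to the multiplication operator $M\colon\psi\mapsto\sigma(s,\cdot,w)\psi$. Your kernel representation is also legitimate, but only as an \emph{upper} bound: since the $\varphi_k$ span just the symmetric part of $L^2_{\mathfrak{M}}$, Parseval must be weakened to Bessel, giving
\begin{equation*}
\|M\|^2_{L_2(\mathcal{H}_\Xi,\HzzetaRd)}\le\int_\Rd\big\|\sigma(s,\cdot,w)\,e^{-\ii\,\cdot\,\xi}\big\|^2_{\HzzetaRd}\,\mathfrak{M}(d\xi)
\end{equation*}
(for the same reason $\sum_k|e_k(x)|^2=\int\cos^2(x\xi)\,\mathfrak{M}(d\xi)$ rather than $\int\mathfrak{M}(d\xi)$; that part is harmless). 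The genuine gap is the step you postpone as ``delicate bookkeeping'': it cannot be closed under hypothesis \eqref{eq:meascm2}. A pointwise bound on $\sum_k|e_k(x)|^2$ helps only when the target norm carries no derivatives; once the weights $\langle D\rangle^{j+\zeta}$ are present, derivatives fall on the oscillating factor, and Peetre's inequality yields $\|\sigma(s,\cdot,w)e^{-\ii\,\cdot\,\xi}\|_{H^{z-j,j+\zeta}}\lesssim\langle\xi\rangle^{j+\zeta}\|\sigma(s,\cdot,w)\|_{H^{z-j,j+\zeta}}$, a growth in $\xi$ which is \emph{sharp}, because multiplication by $e^{-\ii\,\cdot\,\xi}$ translates the spectrum by $\xi$: for a fixed bump $\chi$ one has $\|\chi e^{-\ii\,\cdot\,\xi}\|_{H^{0,\rho}}\approx\langle\xi\rangle^{\rho}\|\chi\|_{L^2}$ as $|\xi|\to\infty$. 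Hence what your argument actually proves is \eqref{battezzata2} with $\int_\Rd\langle\xi\rangle^{2(z+\zeta)}\mathfrak{M}(d\xi)$ in place of $\int_\Rd\mathfrak{M}(d\xi)$, i.e.\ the lemma under a strictly stronger moment condition on the spectral measure. This is not an inefficiency of your bookkeeping; the growth is intrinsic to the operator being estimated. Indeed, in $d=1$ take $\Gamma(dx)=(1+\cos(\xi_0x))\,dx$, a nonnegative, nonnegative definite, tempered measure, so that $\mathfrak{M}=2\pi\delta_0+\pi(\delta_{\xi_0}+\delta_{-\xi_0})$ and $\int_\R\mathfrak{M}(d\xi)=4\pi$ for every $\xi_0$. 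Then $f=(2\pi)^{-1/2}\mathbf{1}_{\{\pm\xi_0\}}$ is a unit vector of $L^2_{\mathfrak{M},s}$, so $e:=\widehat{f\mathfrak{M}}=\sqrt{2\pi}\cos(\xi_0\,\cdot)$ is a unit vector of $\mathcal{H}_\Xi$, and for a fixed $v$, any $z\in\N$ and $\zeta\ge1$ (take $\sigma(s,x,u)=v(x)\in\Lip(z,\zeta)$ and $t=s$, so $S(0)=\mathrm{Id}$),
\begin{equation*}
\|\Phi(s,s)\|_{L_2(\mathcal{H}_\Xi,\mathcal{H}_{z,\zeta}(\R))}\ge\|v\,e\|_{\mathcal{H}_{z,\zeta}(\R)}\gtrsim\|\partial_x(v\,e)\|_{L^2}\ge\sqrt{2\pi}\big(\xi_0\|v\sin(\xi_0\,\cdot)\|_{L^2}-\|v'\|_{L^2}\big)\gtrsim\xi_0\|v\|_{L^2}
\end{equation*}
for $\xi_0$ large (using $\mathcal{H}_{z,\zeta}(\R)\hookrightarrow H^{0,1}(\R)$ and $\|v\sin(\xi_0\,\cdot)\|_{L^2}\to\|v\|_{L^2}/\sqrt2$), while the right-hand side of \eqref{battezzata2} stays bounded. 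So no rearrangement of derivatives inside your upper bound can ever deliver \eqref{battezzata2} with the zeroth moment of $\mathfrak{M}$ alone.

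This deferred step is also precisely where your route separates from the paper's. The paper does not use the kernel representation: after Plancherel it rewrites each term as a convolution, $\mathcal{F}\big((\langle\cdot\rangle^{z-k}\langle D\rangle^{k+\zeta}\sigma)\,\widehat{f_j\mathfrak{M}}\big)\propto\mathcal{F}\big(\langle\cdot\rangle^{z-k}\langle D\rangle^{k+\zeta}\sigma\big)\ast(f_j\mathfrak{M})$, and applies Bessel's inequality in $L^2_{\mathfrak{M}}$ to the sum over $j$, so that only $\|\sigma(s,\cdot,w)\|^2_{\Hzzeta}$ and $\int\mathfrak{M}(d\xi)$ survive. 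In that chain the weighted-derivative operator acts on $\sigma$ \emph{alone} and never on the basis functions $\widehat{f_j\mathfrak{M}}$; but the Hilbert--Schmidt norm being computed requires it to act on the \emph{product} $\sigma(s,\cdot,w)\widehat{f_j\mathfrak{M}}$, which is how the chain in \eqref{las} begins. The two computations differ exactly by the terms in which derivatives land on $\widehat{f_j\mathfrak{M}}$, and these are the terms your Peetre factors quantify. In other words, your accounting is the faithful one for the quantity defined in the lemma, and the obstruction you ran into is real rather than an artifact of your method; in comparing with the printed proof, the step to scrutinize is the passage from \eqref{las} to the convolution identity, where $\langle\cdot\rangle^{z-k}\langle D\rangle^{k+\zeta}$ migrates from the product onto $\sigma$. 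As it stands, your proposal constitutes a correct proof of the lemma under the additional assumption $\int_\Rd\langle\xi\rangle^{2(z+\zeta)}\mathfrak{M}(d\xi)<\infty$ (and of the case $z=\zeta=0$, where $\mathcal{H}_{0,0}=L^2$ and no derivatives occur), but not of the statement as given.
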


\begin{remark} Lemma \ref{lem:weightedpesz2} shows that the multiplication operator $\mathcal H_\Xi\ni\psi\mapsto S(t-s)\sigma(s,u)\cdot \psi$ is  Hilbert-Schmidt from $\mathcal H_\Xi$ to $\caH=\Hzzeta$ under suitable assumptions on $\sigma$. Therefore, the noise $\Xi$ defines a cylindrical Wiener process on
$(\mathcal H_\Xi,\langle\cdot,\cdot\rangle_{\mathcal H_\Xi})$ with values in $\Hzzeta$, and the third summand in the right-hand side of \eqref{eq:rigorous_solution} is a well-defined stochastic integral with respect to a cylindrical  Wiener process on 
$(\mathcal H_\Xi,\langle\cdot,\cdot\rangle_{\mathcal H_\Xi})$ which takes values in
$\Hzzeta$ (see Remark \ref{rem:peslem}). 
To prove Theorem \ref{main}, we will rely on the mapping properties of the fundamental solution $S(t)$ on the $\Hzzeta$ spaces, illustrated in Remark \ref{S}, and on a fixed point scheme.
\end{remark}

\begin{proof}[Proof of Lemma \ref{lem:weightedpesz2}]

We fix an orthonormal basis $\{e_j\}_{j\in\N}=\{\widehat{f_j\mathfrak{M}}\}_{j\in\N}$ of $\mathcal H_\Xi$, where $\{f_j\}_{j\in\N}$ is an orthonormal basis in $L^2_{\mathfrak{M},s}$. Using the definition of Hilbert-Schmidt norm and the continuity of the map $S(t): \Hzzeta\longrightarrow \Hzzeta$, we compute
\begin{equation}\label{las}
\begin{aligned}
||\Phi(t,s)||_{L_2(\mathcal H_\Xi, \Hzzeta)}^{2}&=\sum_{j\in\N}||S(t-s)\sigma(s, \cdot, w)\widehat{f_j\mathfrak{M}}||_{\Hzzeta}^2
\leq 
e^{2C_{z,\zeta}t}\sum_{j\in\N}||\sigma(s, \cdot, w)\widehat{f_j\mathfrak{M}}||_{\Hzzeta}^2
\\
&\lesssim e^{2C_{z,\zeta}t}\sum_{j\in\N}\sum_{k=0}^r||\langle \cdot\rangle^{z-k}\langle D\rangle^{k+\zeta}\sigma(s, \cdot, w)\widehat{f_j\mathfrak{M}}||_{L^2}^2
\\
&=  e^{2C_{z,\zeta}t}(2\pi)^{-d}\sum_{k=0}^r\sum_{j\in\N}\int_{\R^d}\left\vert\mathcal F\left(\langle \cdot\rangle^{z-k}\langle D\rangle^{k+\zeta}\sigma(s, \cdot, w)\widehat{f_j\mathfrak{M}}\right)\right\vert^2(\xi)d\xi,
\end{aligned}
\end{equation}
where $C_{z,\zeta}$ is the costant appearing in Theorem \ref{T14craig}. Now, using the well-known fact that the Fourier transform of a product is the ($(2\pi)^{-d}$ multiple of the)
convolution of the Fourier transforms, the
property $f_j(-x)=f_j(x)$ (by the definition of $L^2_{\mathfrak{M},s}$), that
$\{f_j\}$ is an orthonormal system in $L^2_{\mathfrak{M}}$, and Bessel's inequality,
we get
\beqsn
\sum_{j\in\N}&&\hskip-0.3cm\left\vert\mathcal F\left(\langle \cdot\rangle^{z-k}\langle D\rangle^{k+\zeta}\sigma(s, \cdot, w)\widehat{f_j\mathfrak{M}}\right)\right\vert^2(\xi)
\\
&=&(2\pi)^{-d}\sum_{j\in\N}|\mathcal F\left(\langle \cdot\rangle^{z-k}\langle D\rangle^{k+\zeta}\sigma(s, \cdot, w)\right)\ast\widehat{\widehat{f_j\mathfrak{M}}}|^2(\xi)
\\
&=&\sum_{j\in\N}|\mathcal F\left(\langle \cdot\rangle^{z-k}\langle D\rangle^{k+\zeta}\sigma(s, \cdot, w)\right)\ast f_j\mathfrak{M}|^2(\xi)
\\
&=&\sum_{j\in\N}\left\vert\int_{\R^d}\left[\mathcal F\left(\langle \cdot\rangle^{z-k}\langle D\rangle^{k+\zeta}\sigma(s, \cdot, w)\right)\right](\xi-\eta) f_j(\eta)\mathfrak{M}(d\eta)\right\vert^2
\\
&\le &\int_{\R^d}\left\vert\mathcal F\left(\langle \cdot\rangle^{z-k}\langle D\rangle^{k+\zeta}\sigma(s, \cdot, w)\right)\right\vert^2(\xi-\eta)\mathfrak{M}(d\eta).
\\
\eeqsn
Inserting this in \eqref{las}, and using the continuity of $\langle \cdot\rangle^{z-k}\langle D\rangle^{k+\zeta}$ on the Sobolev-Kato spaces, we finally get:
\begin{equation*}
\begin{aligned}
||\Phi(t,s)||_{L_2(\mathcal H_\Xi, \Hzzeta)}^{2}
&\lesssim e^{2C_{z,\zeta}t}(2\pi)^{-d}\sum_{k=0}^z
\int_{\R^d}\int_{\R^d}\left\vert\mathcal F\left(\langle \cdot\rangle^{z-k}\langle D\rangle^{k+\zeta}\sigma(s, \cdot, w)\right)\right\vert^2(\xi-\eta)\mathfrak{M}(d\eta)d\xi
\\
&= e^{2C_{z,\zeta}t}(2\pi)^{-d}\sum_{k=0}^z\int_{\R^d}\int_{\R^d}\left\vert\mathcal F\left(\langle \cdot\rangle^{z-k}\langle D\rangle^{k+\zeta}\sigma(s, \cdot, w)\right)\right\vert^2(\theta)\mathfrak{M}(d\eta)d\theta
\\
&= e^{2C_{z,\zeta}t}(2\pi)^{-d}\sum_{k=0}^z\left(\int_{\R^d}\mathfrak{M}(d\eta)\right)\int_{\R^d}\left\vert\mathcal F\left(\langle \cdot\rangle^{z-k}\langle D\rangle^{k+\zeta}\sigma(s, \cdot, w)\right)\right\vert^2(\theta)d\theta
\\
&= e^{2C_{z,\zeta}t}(2\pi)^{-d}\int_{\R^d}\mathfrak{M}(d\eta)\sum_{k=0}^z\|\mathcal F(\langle \cdot\rangle^{z-k}\langle D\rangle^{k+\zeta}\sigma(s, \cdot, w))\|_{L^2}^2
\\
&\lesssim e^{2C_{z,\zeta}t}\int_{\R^d}\mathfrak{M}(d\eta)\sum_{k=0}^z\|\langle \cdot\rangle^{z-k}\langle D\rangle^{k+\zeta}\sigma(s, \cdot, w)\|_{L^2}^2
\\
&= e^{2C_{z,\zeta}t}\int_{\R^d}\mathfrak{M}(d\eta)\sum_{k=0}^z\|\sigma(s, \cdot, w)\|_{H^{z-k,k+\zeta}}^2
\\
&\lesssim e^{2C_{z,\zeta}t}\|\sigma(s, \cdot, w)\|_{\mathcal H_{z,\zeta}}^2\int_{\R^d}\mathfrak{M}(d\eta)
\\
&\leq e^{2C_{z,\zeta}t}C_s^2\left(1+\|w\|_{\mathcal H_{z,\zeta}}\right)^2\int_{\R^d}\mathfrak{M}(d\eta),
\end{aligned}
\end{equation*}
where $C_s$ is the constant in Definition \ref{def:lip} and is a continuous function with respect to $s\in [0,T]$.
\end{proof}

\noindent
We are now ready to prove our main result.

\begin{proof}[Proof of Theorem \ref{main}]

Inserting the right-hand side $f(s,u(s))=\gamma(s,u(s)) + \sigma(s,u(s))\dot{\Xi}(s)$ in the solution 
$$
	u(t) = S(t)u_0-i\int_0^tS(t-s)f(s,u(s))ds
$$
of the associated deterministic Cauchy problem with data $f, u_0$, and operator $P$, we can formally construct the ``mild solution" $u$ to \eqref{eq:SPDE}, namely
\[
	\begin{aligned}
	u(t)&=S(t)u_0-i\int_0^tS(t-s)\gamma(s,u(s))ds
	-i\int_0^tS(t-s)\sigma(s,u(s))\dot{\Xi}(s)ds.
	\end{aligned}
\]
By the linear deterministic theory in \cite{craig}, we know that
\begin{equation}\label{eq:regv0}
	v_0(t):=S(t)u_0\in C([0,T], \Hzzeta),
\end{equation}
since, by hypotheses, $u_0\in \Hzzeta$. We then consider the map $u\to \mathcal{T}u$ on 
$L^2([0,T_0]\times\Omega, \Hzzeta)$, $T_0\in(0,T]$ small enough, defined as
\beqs\label{calm}
\hskip-0.8cm \mathcal{T}u(t)&:=&
v_0(t)-i\ds\int_0^t S(t-s)\gamma(s,u(s))ds-i\ds\int_0^t S(t-s)\sigma(s,u(s))dW_s
\\
\nonumber
&:=&v_0(t)+\mathcal{T}_1u(t)+\mathcal{T}_2u(t), \quad t\in[0,T_0], 
\eeqs
where the last integral on the right-hand side is understood as the stochastic integral of the stochastic process $S(t-*)\sigma(*,\cdot,u(*,\cdot))\in L^2([0,T_0]\times\Omega, \Hzzeta)$ with respect to the cylindrical Wiener process $\{W_t(h)\}_{t\in[0,T], h\in \Hzzeta}$,
associa\-ted with the random noise $\Xi(t)$, which is well-defined and takes values in $\Hzzeta$, by Lemma \ref{lem:weightedpesz2}.

We initially work under the stronger assumption $\gamma,\sigma\in\Lip(z,\zeta)$. To prove that the mild solution \eqref{eq:rigorous_solution} 
of the Cauchy problem \eqref{eq:SPDE} indeed exists and has the stated properties, it is enough to check that
\[
	\mathcal{T}\colon L^2([0,T_0]\times\Omega, \Hzzeta)\longrightarrow L^2([0,T_0]\times\Omega,\Hzzeta)
\]
is well-defined, it is Lipschitz continuous on $L^2([0,T_0]\times\Omega, \Hzzeta)$, and it becomes a strict contraction if we take $T_0\in(0,T]$ small enough. Then, an application of Banach's fixed point Theorem will provide the existence of a unique solution $u\in L^2([0,T_0]\times\Omega, \Hzzeta)$ satisfying $u=\mathcal{T}u$, that is \eqref{eq:rigorous_solution}.

\vskip+0.2cm

We first check that $\mathcal{T}u\in L^2([0,T_0]\times\Omega, \Hzzeta)$ for every $u\in L^2([0,T_0]\times\Omega, \Hzzeta)$. We have:
\begin{itemize}
\item[-] $v_0\in C([0,T_0], \Hzzeta)\hookrightarrow L^2([0,T_0]\times\Omega, \Hzzeta)$;
\item[-] $\mathcal{T}_1u$ is in $L^2([0,T_0]\times\Omega, \Hzzeta)$; indeed, $\mathcal{T}_1u(t)$ is defined as the Bochner integral on $[0,t]$ of the function
$s\mapsto -iS(t-s)\gamma(s,\cdot,u(s))$, which takes values in $L^2(\Omega, \Hzzeta)$; by the properties of Bochner integrals, the continuity of 
$S(t-s)$ on the $\Hzzeta$ spaces, and the assumption $\gamma\in \Lip(z,\zeta)$, we see that 
\beqs\nonumber
\|\mathcal{T}_1u\|_{L^2([0,T_0]\times\Omega, \Hzzeta)}^2&&=\ds\E\left[\int_0^{T_0}\|\mathcal{T}_1u(t)\|_{\Hzzeta}^2\,dt\right]
=
\ds\int_0^{T_0}\E\left[\,\left\|\ds\int_0^t S(t-s)[\gamma(s,\cdot,u(s))]ds\right\|_{\Hzzeta}^2\,\right]dt
\\\nonumber
&&\leq\ds\int_0^{T_0}\!\!\ds\int_0^t \E\left[\,\left\|S(t-s)[\gamma(s,\cdot,u(s))]\right\|_{\Hzzeta}^2\,\right]dsdt
\lesssim \ds\int_0^{T_0}\!\!\ds\int_0^t e^{2C_{z,\zeta}(t-s)} \,\E\left[\,\left\|\gamma(s,\cdot,u(s))\right\|_{\Hzzeta}^2\,\right] dsdt
\\\nonumber
&&
\leq \ds\int_0^{T_0}\!\!\ds\int_0^t e^{2C_{z,\zeta}(t-s)}C_s^2\,\E\left[\left(1+\|u(s)\right\|_{\Hzzeta})^2\right]dsdt
\leq T_0\left(\max_{0\leq s\leq t\leq T_0}e^{2C_{z,\zeta}(t-s)}C_s^2\right)
\!\ds\int_0^{T_0}\E\left[\left(1+\|u(s)\right\|_{\Hzzeta})^2\right]ds
\\\label{uguale}
&&\lesssim T_0C_{T_0}\left[T_0+\|u\|^2_{L^2([0,T_0]\times\Omega, \Hzzeta)}\right]<\infty,
\eeqs
where $C_{T_0}$ depends continuously on $T_0$;
\\
\item[-] $\mathcal{T}_2u$ is in $L^2([0,T_0]\times\Omega, \Hzzeta)$, in view of the fundamental isometry \eqref{isomhilb},
Lemma \ref{lem:weightedpesz2}, and the fact that the expectation can be moved inside and outside time integrals, by Fubini's Theorem:
\beqs
\nonumber
\|\mathcal{T}_2u\|_{L^2([0,T_0]\times\Omega, \mathcal H_{z,\zeta})}^{2}&=&\E\left[\ds\int_0^{T_0}\|\mathcal{T}_2u(t)\|_{\Hzzeta}^2dt\right]
=\ds\int_0^{T_0}\E\left[\left\|\ds\int_0^t S(t-s)\sigma(s,\cdot,u(s))dW_s\right\|_{\Hzzeta}^2\right]dt
\\
\nonumber
&=&\ds\int_0^{T_0}\ds\int_0^t \E\left[\left\|S(t-s)\sigma(s,\cdot,u(s))\right\|_{L_2(\mathcal H_\Xi, \Hzzeta)}^2\right]dsdt
\\
\nonumber
&\lesssim&\ds\int_0^{T_0}\ds\int_0^t \E\left[ C_{t,s}\left(1+\|u(s)\|_{\Hzzeta}\right)^2 \int_\Rd \mathfrak{M}(d\xi)\right]dsdt
\\
\nonumber
&=&\int_\Rd \mathfrak{M}(d\xi)\cdot \left(\max_{0\leq s\leq t\leq T_0} C_{t,s}\right)\cdot T_0\cdot 
\ds\int_0^{T_0} \E\left[  \left(1+\|u(s)\|_{\Hzzeta}\right)^2
\right]ds
\\
\nonumber
&\lesssim &T_0C_{T_0} \int_\Rd \mathfrak{M}(d\xi) 
\left(T_0+\ds\int_0^{T_0} \E\left[\left\|u(s)\right\|_{\Hzzeta}^2\right]ds\right)
\\
\label{eq:estT2}
&=&T_0C_{T_0} \int_\Rd \mathfrak{M}(d\xi) \cdot\left[T_0+\|u\|^2_{L^2([0,T_0]\times\Omega, \Hzzeta)}\right]<\infty,
\eeqs
with $C_{T_0}$ continuous with respect to $T_0$.
\end{itemize}

\vskip+0.2cm
Now, we show that $\mathcal T$ is a contraction for $T_0\in(0,T]$ suitably small. We take $u_1,u_2\in L^2([0,T_0]\times\Omega, \Hzzeta)$ and compute
\beqs\nonumber
\|\mathcal{T}u_1&-&\mathcal{T}u_2\|_{L^2([0,T_0]\times\Omega, \Hzzeta)}^2
\\\nonumber
&\leq& 2\left(\|\mathcal{T}_1u_1-\mathcal{T}_1u_2\|_{L^2([0,T_0]\times\Omega, \mathcal H_{z,\zeta})}^2+\|\mathcal{T}_2u_1-
\mathcal{T}_2u_2\|_{L^2([0,T_0]\times\Omega, \Hzzeta)}^2\right)
\\\label{pri2}
&=&2\ds\int_0^{T_0}\E\left[\,\left\|\ds\int_0^t S(t-s)(\gamma(s,\cdot,u_1(s))-\gamma(s,\cdot,u_2(s)))ds\right\|_{\Hzzeta}^2\,\right]dt
\\\label{se2}
&+&2\ds\int_0^{T_0}\E\left[\,\left\|\ds\int_0^t S(t-s)(\sigma(s,\cdot,u_1(s))-\sigma(s,\cdot,u_2(s)))dW_s\right\|_{\Hzzeta}^2\,\right]dt.
\eeqs
To estimate the term \eqref{pri2}, we first move the expectation and the $\Hzzeta$-norm inside the integral with respect to $s$.
Then, by continuity of $S(t-s)$ on the $\Hzzeta$ spaces and the second requirement in Definition \ref{def:lip}, we obtain
\beqsn%
\ds\int_0^{T_0}\E&&\left[\,\left\|\ds\int_0^t S(t-s)(\gamma(s,\cdot,u_1(s))-\gamma(s,\cdot,u_2(s)))ds\right\|_{\Hzzeta}^2\,\right]dt
\\
&&\lesssim\ds\int_0^{T_0}\ds\int_0^t \E\left[\,\left\|S(t-s)(\gamma(s,\cdot,u_1(s))-\gamma(s,\cdot,u_2(s)))\right\|_{\Hzzeta}^2\,\right]dsdt
\\
&&\leq \ds\int_0^{T_0}\ds\int_0^t e^{2C_{z,\zeta}(t-s)}\,\E\left[\,\left\|\gamma(s,\cdot,u_1(s))-\gamma(s,\cdot,u_2(s))\right\|_{\mathcal H_{z,\zeta}}^2\,\right]dsdt
\\
&&\leq \ds\int_0^{T_0}\ds\int_0^t e^{2C_{z,\zeta}(t-s)}C_s^2\,\E\left[\,\left\|u_1(s)-u_2(s)\right\|_{\mathcal H_{z,\zeta}}^2\,\right]dsdt
\\
&&\leq C_{T_0}T_0\ds\int_0^{T_0} \E\left[\,\left\|u_1(s)-u_2(s)\right\|_{\mathcal H_{z,\zeta}}^2\,\right]ds
\\
&&=C_{T_0}T_0\|u_1-u_2\|^2_{L^2([0,T_0]\times\Omega, \mathcal H_{z,\zeta})},
\eeqsn
with $C_{T_0}$ continuous with respect to $T_0$. To estimate the term \eqref{se2} we apply, 
here below, the fundamental isometry \eqref{isomhilb} to pass from the first to the second line, 
the second requirement in Definition \ref{def:lip} and the analog of formula \eqref{battezzata2} from Lemma \ref{lem:weightedpesz2},
with $\sigma(s,\cdot,u_1(s))-\sigma(s,\cdot,u_2(s))$ in place of $\sigma(s,\cdot,w)$, to pass from the second to the third line, and finally obtain
\beqsn
\ds\int_0^{T_0}&&\hskip-0.3cm\E\left[\left\|\ds\int_0^t S(t-s)(\sigma(s,\cdot,u_1(s))-\sigma(s,\cdot,u_2(s)))dW_s\right\|_{\mathcal H_{z,\zeta}}^2\right]dt
\\
&&=\ds\int_0^{T_0}\ds\int_0^t \E\left[\left\|S(t-s)(\sigma(s,\cdot,u_1(s))-\sigma(s,\cdot,u_2(s)))\right\|_{L_2(\mathcal H_\Xi, \mathcal H_{z,\zeta})}^2\right]dsdt
\\
\\
&&\lesssim \ds\int_0^{T_0}\ds\int_0^t \E\left[
C_{t,s}\|u_1(s)-u_2(s)\|_{\mathcal H_{z,\zeta}}^2\int_\Rd {\mathfrak{M}(d\xi)}\right]dsdt
\\
&&\leq \int_\Rd {\mathfrak{M}(d\xi)}
\ds\int_0^{T_0}\ds\int_0^t C_{t,s}\E\left[\left\|u_1(s)-u_2(s)\right\|_{\mathcal H_{z,\zeta}}^2\right]dsdt
\\
&&\lesssim\int_\Rd {\mathfrak{M}(d\xi)}
\cdot C_{T_0}T_0\cdot \|u_1-u_2\|^2_{L^2([0,T_0]\times\Omega, \mathcal H_{z,\zeta})},
\eeqsn
with $C_{T_0}$ continuous with respect to $T_0$. Summing up, we have proved that
\beqsn
\|\mathcal{T}u_1-\mathcal{T}u_2\|_{L^2([0,T_0]\times\Omega, \mathcal H_{z,\zeta})}^2\leq
C_{T_0}T_0\left(1+\int_\Rd {\mathfrak{M}(d\xi)}\right)
\cdot \|u_1-u_2\|^2_{L^2([0,T_0]\times\Omega, \mathcal H_{z,\zeta})} \,,
\eeqsn
that is, $\mathcal{T}$ is Lipschitz continuous on $L^2([0,T_0]\times\Omega, \mathcal H_{z,\zeta})$. Moreover, in view of the assumption \eqref{eq:meascm2} and the
continuity of $C_{T_0}$ with respect to $T_0$, we can take $T_0>0$ so small that
\[
	C_{T_0}T_0\left(1+\int_\Rd {\mathfrak{M}(d\xi)}\right)< 1,
\]
making $\mathcal{T}$ a strict contraction on $L^2([0,T_0]\times\Omega, \mathcal H_{z,\zeta})$, so that it admits a unique fixed point $u=\mathcal{T}u$,
$u\in L^2([0,T_0]\times\Omega, \mathcal H_{z,\zeta})$, as claimed.

\vskip+0.2cm
Finally, when $\gamma,\sigma\in\Liploc(z,\zeta)$, we first observe that there exists 
$\BBB$, closed ball of radius $R>0$, centred in $u_0$, such that $\BBB\subset U$. Since, of course, 
\[
	v_0\in C([0,T_0], \Hzzeta) \text{ and } S(0)u_0=u_0 \Rightarrow \exists T_0\in(0,T] \colon  \|S(t)u_0-u_0\|_{\Hzzeta}\le \frac{R}{2}, \; t\in[0,T_0],
\]
by computations similar to those employed to show \eqref{uguale} and \eqref{eq:estT2}, it turns out that, choosing $T_0\in(0,T]$ suitably small,
\[
	\|\mathcal{T}_1u(t)\|_{\Hzzeta} + \|\mathcal{T}_2u(t)\|_{\Hzzeta}
	\le 
	\left(T_0 C_{T_0}\right)^\frac{1}{2}(1+\|u_0\|_{\Hzzeta}+R)\left[\left(\int_{\R^d}\mathfrak{M}(d\xi)\right)^\frac{1}{2}+1\right]\le\frac{R}{2}, 
	\; u(t)\in\BBB, t\in[0,T_0], 
\]
so that $\mathcal{T}\colon L^2([0,T_0]\times\Omega, \BBB)\to L^2([0,T_0]\times\Omega, \BBB)$ and it is a strict contraction there.

\vskip+0.2cm

The proof is complete.
\end{proof}

\begin{remark}We conclude remarking that it is possible, in the linear case (that is, when drift and diffusion do not depend on $u$), to construct also a random-field solution $u_{\mathrm{rf}}$ of the Cauchy problem \eqref{eq:SPDE}, under condition \eqref{eq:meascm2}.
Explicitly, $u_{\mathrm{rf}}$ is defined as a map associating a random variable with each
$(t,x)\in[0,T_0]\times\Rd$, where $T_0>0$ is the time horizon of the solution. This is achieved by the properties of the fundamental solution to the operator $P$ from Theorem \ref{T14craig}, following the scheme developed in \cite{ACS19a,linearpara,alessiandre}. Indeed, random-field solutions can be defined by means of a martingale measure, derived from the random noise $\dot \Xi$: this is the approach due to Walsh and Dalang, see \cite{conusdalang,dalang,walsh}, to interpret stochastic integrals. 
As it is well known, in several cases the two approaches to stochastic integration produce ``the same solutions'', see the comparison paper \cite{dalangquer}.
The random-field solution $u_{\mathrm{rf}}$ turns out to coincide with the function-valued solution $u$ obtained in Theorem \ref{main}, see \cite{ACS19b} for an explicit comparison in the case of hyperbolic SPDEs. The details of the analysis of $u_{\rm{rf}}$ sketched above will appear elsewhere.
\end{remark}
%%%%%%%%%%%%%%%%%%%%%%%%%%%%%%%%%%%%%%%%%%%%%%%%%%%%%%%
%																											%
%	BIBLIO																						%
%																											%
%%%%%%%%%%%%%%%%%%%%%%%%%%%%%%%%%%%%%%%%%%%%%%%%%%%%%%%

\end{document}